\newcommand{\Z}{\mathbb{Z}}
\newtheorem{thm}{Theorem}[section]
\newtheorem{lemma}[thm]{Lemma}
\newtheorem{cor}[thm]{Corollary}
\renewcommand{\choose}[2]{{{#1}\atopwithdelims(){#2}}}
\newcommand{\mchoose}[2]{%
  \mathchoice%
    {\left(\kern-0.48em\choose{#1}{#2}\kern-0.48em\right)}
    {\left(\kern-0.30em\choose{\smash{#1}}{\smash{#2}}\kern-0.30em\right)}
    {\left(\kern-0.30em\choose{\smash{#1}}{\smash{#2}}\kern-0.30em\right)}
    {\left(\kern-0.30em\choose{\smash{#1}}{\smash{#2}}\kern-0.30em\right)}
}
\newcommand{\cy}[2]{{{#1}\atopwithdelims[]{#2}}}
\newcommand{\pa}[2]{{{#1}\atopwithdelims\{\}{#2}}}
\newcommand{\ch}{\choose}
\newcommand{\el}{\ell}
\newcommand{\mch}{\mchoose}
\newcommand{\lp}{\left(}
\newcommand{\rp}{\right)}
\newcommand{\noi}{\noindent}
\newcommand{\f}{\frac}
\newcommand{\ha}{\f{1}{2}}
\newcommand{\8}{\infty}
\newcommand{\lbar}[2]{\,\underset{#1}{ \overset{#2}{|} }\, }
\DeclareMathOperator{\Res}{Res}
\DeclareMathOperator{\Real}{Re}
\def\PA{preferential arrangement}
\def\BPA{barred \PA}
\def\SBPA{special \BPA}
\def\GKP{Graham, Knuth and Patashnik}
\def\WW{Whittaker and Watson}
\title{Barred Preferential Arrangements}
\author{Connor Ahlbach, Jeremy Usatine and Nicholas Pippenger\footnote{Authors' address: Department of Mathematics, Harvey Mudd College, 301 Platt Blvd., Claremont, CA 91711.
Authors' e-mail addresses: 
$\{${Connor\_Ahlbach}, {Jeremy\_Usatine},{Nicholas\_Pippenger}$\}$@\,{hmc.edu}. }}
\date{\today}
\begin{document}

\begin{titlepage}
\maketitle

\begin{abstract}
A {\em preferential arrangement} of a set is a total ordering of the elements of that set with ties allowed.
A {\em barred} preferential arrangement is one in which the tied blocks of elements are ordered not only amongst themselves but also with respect to one or more bars.
We present various combinatorial identities for $r_{m,\el}$, the number of barred preferential arrangements of $\el$ elements with $m$ bars, using both algebraic and combinatorial arguments.
Our main result is an expression for $r_{m,\el}$ as a linear combination of the $r_k$
($= r_{0,k}$, the number of unbarred preferential arrangements of $k$ elements) for 
$\el\le k\le\el+m$.
We also study those arrangements in which the sections, into which the blocks are segregated by the bars, must be nonempty.
We conclude with an expression of $r_\el$ as an infinite  series that is both convergent and 
asymptotic.
\end{abstract}

\end{titlepage}

\pagenumbering{arabic}

\section{Introduction}

A \emph{preferential arrangement} on $ [\el]  = \{1,\ldots,\el\}$ is a ranking of the elements of $ [\el] $ where ties are allowed. 
For example, the \PA{}s on $ [2] $ include 1 ranked before 2, 2 ranked before 1, and 1 and 2 tied, which we write as
\[
	1,2 \qquad 2,1 \qquad 12
\]
respectively. 
Let $ R(\el) $ denote the set of \PA{}s on $ [\el] $, and
let $ r_{\el} = | R(\el) | $ denote the number of \PA{}s on $ [\el] $. 
For example, from the above, $ r_2 = 3 $. 
We define a \emph{block} of a \PA{} as a maximal set of elements in a \PA{} which are tied in rank. For notation, adjacent numbers represent elements in the same block, and commas separate the blocks. For example, in the \PA{}
\[
	134, 26, 5
\] 
the blocks are $ \{ 1,3,4 \}, \{ 2,6 \}, $ and $ \{ 5\} $.


A \emph{barred \PA} on $ [\el] $ with $ m $ bars is a ranking of the elements of $ [\el] $ where ties are allowed, and $ m $ bars are placed to separate the blocks into $ m +1 $ \emph{sections}. 
No bar can divide a block in two. 
Section $ 0 $ is the region before the first (leftmost) bar. 
Section $  m $ is the region after the last (rightmost) bar. 
And, for all $ 1 \le i \le m - 1 $, section $ i $ is the region between the $i$th and $ (i + 1) $th bars from the left. 
Each section is its own \PA. 
For example, the \BPA{}s on $ [1] $ with 2 bars are
\[
	1 | | \qquad | 1 | \qquad | | 1.
\] 
The \BPA
\[
	183, 4 | 56,7 | 92
\]
is a \BPA{} on $ [9] $ with 2 bars where section 0 is $ 183, 4 $, section 1 is $ 56, 7 $ and section 2 is $ 92 $. 
Let $ R(m, \el) $ denote the set of \BPA{}s on $ [\el] $ with $ m $ bars, and let $ r_{m,\el} = | R(m,\el) | $ denote the number of \BPA{}s on $ [\el] $ with $ m $ bars. 
For example, from the above, $ r_{2,1} = 3 $. 


We provide a table of values of $ r_{m,\el} $ for small values of $ m $ and $ \el $:

\begin{center}
\begin{tabular}{|*{10}{*{1}{l}|}}
\hline
$m\backslash \ell$ & ${\bf0}$ & ${\bf1}$ & ${\bf2}$ & ${\bf3}$ & ${\bf4}$ & ${\bf5}$ & ${\bf6}$ & ${\bf7}$ & ${\bf8}$\\
\hline
${\bf0}$& $1$ & $1$ & $3$ & $13$ & $75$ & $541$ & $4683$ & $47293$ & $545835$ \\
${\bf1}$& $1$ & $2$ & $8$ & $44$ & $308$ & $2612$ & $25988$ & $296564$ & $3816548$ \\
${\bf2}$& $1$ & $3$ & $15$ & $99$ & $807$ & $7803$ & $87135$ & $1102419$ & $15575127$ \\
${\bf3}$& $1$ & $4$ & $24$ & $184$ & $1704$ & $18424$ & $227304$ & $3147064$ & $48278184$ \\
${\bf4}$& $1$ & $5$ & $35$ & $305$ & $3155$ & $37625$ & $507035$ & $7608305$ & $125687555$ \\
${\bf5}$& $1$ & $6$ & $48$ & $468$ & $5340$ & $69516$ & $1014348$ & $16372908$ & $289366860$ \\
\hline
\end{tabular}
\end{center}
 
The notion of a preferential arrangement occurs 
if $\el$ candidates have been interviewed and evaluated for a position; a preferential arrangement of $\el$ elements may be used to indicate the order (with possible ties) of their suitability for the position.
The term ``preferential arrangement'' seems to be due to Gross \cite{gross:pas}  in 1962, though the concept had been described in a paper by Touchard \cite{touch:pas} in 1933.
The numbers $r_\el$  appeared even earlier in connection with a problem concerning trees in a paper by Cayley \cite{cayley:trees} in 1859.
Barred preferential arrangements with a single bar were introduced by Pippenger \cite{pipp:pas}, who showed that 
\begin{equation}
\label{eq:1.1} 
	r_{1,\el} = {1\over 2}r_\el + {1\over 2}r_{\el+1}. 
\end{equation}
If $\el$ candidates have been interviewed  for a position, a single bar might be used to separate the candidates who are worthy of being hired from those who are not (with distinctions being possible among the unworthy as to their degree of unworthiness).

Our goal in this paper is to study the case of multiple bars.
If, in the situation involving $\el$ candidates, there are $m$ ranks into which candidates may be hired, the first $m-1$ bars might be used to separate the candidates who are suitable for the various ranks (assuming of course that a candidate who is suitable for a given rank is automatically suitable for all lower ranks).
In Section 2 we shall generalize (\ref{eq:1.1}) to the case of multiple bars. 
Our main result is
\[
	r_{m, \el} = \f{ 1}{ 2^m m!} \sum_{ i = 0 }^{m} \cy{m + 1}{ i + 1} r_{ \el + i},
\]
where $\cy{n}{ k}$ is the Stirling number of the first kind, the number of permutations of $n$ elements having $k$ cycles (see \GKP{} \cite{graham:concrete}, Section 6.1).
We shall give both algebraic and combinatorial (that is, bijective) proofs.
In Section 3 we shall derive a number of other identities involving the $r_{m,\el}$.
In Section 4 we shall explore a variant of barred preferential arrangements for which the sections, into which the blocks are segregated by the bars, are required to be nonempty.
Finally, in Section 5 we shall extend the known asymptotic results concerning $r_\el$, obtaining an infinite series that is at once
both asymptotic and convergent.


\section{Enumerating Barred Preferential Arrangements}

In this section, we shall express $r_{m,\el}$ as a linear combination of the $r_k$ for 
$\el\le k\le\el+m$.
We begin by generalizing (\ref{eq:1.1}) in Theorem  \ref{Recurrence}, which expresses $r_{m,\el}$ in terms of  $r_{m-1,\el}$ and $r_{m-1,\el+1}$, and which we prove by constructing an explicit bijection.
Our main result then appears as Theorem \ref{Main}, for which we give two proofs, the first by induction using Theorem \ref{Recurrence}, and the second by again constructing an explicit bijection.


\begin{thm}
\label{Recurrence}
For $m\ge 1$, we can write $ r_{m,\el} $ in terms of the previous sequence $ \{ r_{m -1, k} \} $ as
\[
	r_{m, \el} = \f{ 1}{ 2m } r_{ m - 1, \el + 1} + \ha r_{ m - 1, \el }.
\]
\end{thm}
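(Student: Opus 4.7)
My plan is to prove the recurrence bijectively, after clearing denominators to obtain the equivalent identity $2m \cdot r_{m, \el} = r_{m-1, \el+1} + m \cdot r_{m-1, \el}$. I would interpret the left side as counting triples $(A, b, \epsilon)$, where $A \in R(m, \el)$, $b$ is one of the $m$ bars of $A$ (indexed left to right), and $\epsilon \in \{L, R\}$ is a direction; the right side counts the disjoint union of $R(m-1, \el+1)$ with $R(m-1, \el) \times \{1, \ldots, m\}$. As a first step, I would reformulate the left side: deleting the $b$-th bar of $A$ yields $A^* \in R(m-1, \el)$ together with a distinguished \emph{slot} $s$ marking where $b$ used to sit, a slot being any gap between two consecutive tokens or at an extreme end of the arrangement. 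A BPA with $m-1$ bars and $k$ blocks has exactly $k + m$ slots, and reinserting a bar at $s$ recovers $(A, b)$, so $(A, b, \epsilon) \leftrightarrow (A^*, s, \epsilon)$ is already a bijection.

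The heart of the proof is a case split on the direction $\epsilon$ together with the identity of the token immediately on side $\epsilon$ of the slot $s$. In Case 1, $\epsilon = R$ and the right-neighbor of $s$ is a block $B$; I send $(A^*, s, R)$ to the BPA in $R(m-1, \el+1)$ obtained by inserting $\el+1$ into $B$, producing an arrangement with $\el+1$ in the non-singleton block $B \cup \{\el+1\}$. In Case 2, $\epsilon = L$; I send $(A^*, s, L)$ to the BPA in $R(m-1, \el+1)$ obtained by inserting $\{\el+1\}$ as a new singleton block at slot $s$. In Case 3, $\epsilon = R$ and the right-neighbor of $s$ is either another bar or the rightmost end; I send the triple to $(A^*, i) \in R(m-1, \el) \times \{1, \ldots, m\}$, where $i$ indexes $s$ among the slots of $A^*$ whose right-neighbor is a bar or an end, listed left to right.

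To verify bijectivity I would describe the inverses: for Cases 1 and 2, whether $\el+1$ lies in a singleton block of the image $A'$ distinguishes the cases, and the location of $\el+1$ in $A'$ determines both $A^*$ and the slot $s$ at which to reinsert the bar; for Case 3 the index $i$ identifies $s$ directly. I would then check the per-$A^*$ counts: Cases 1, 2, 3 contribute $k$, $k+m$, and $m$ triples respectively, summing to $2(k+m)$ as required. The main point requiring care, and the combinatorial explanation of the factor $m$ multiplying $r_{m-1, \el}$, is the verification that $A^*$ always has exactly $m$ slots whose right-neighbor is a bar or an end --- namely the slot immediately left of each of the $m-1$ bars, together with the single rightmost slot --- which is what makes the Case 3 map a clean bijection onto $R(m-1, \el) \times \{1, \ldots, m\}$.
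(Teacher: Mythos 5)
Your proof is correct and is essentially the paper's own argument: both double-count barred preferential arrangements with a distinguished bar carrying a binary label, and then either convert that bar into the new element $\ell+1$ (as a singleton block or merged into an adjacent block) or delete it while recording its position among the remaining bars, yielding the same identity $2m\,r_{m,\ell} = r_{m-1,\ell+1} + m\,r_{m-1,\ell}$. The only cosmetic differences are that you inspect the token to the right of the chosen bar where the paper inspects the token to its left, and you index the deletion case by slots adjacent to bars or the right end rather than by the index of the preceding bar.
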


\begin{proof}

We prove this result combinatorially by establishing a bijection
\[
	f: \{ 0, 1 \} \times [m] \times R(m,\el) \to R(m - 1, \el + 1) \cup \lp R( m - 1, \el) \times ( 0 \cup [m - 1] ) \rp.
\]
Here, $ [m] $ chooses 1 bar out of the $ m $ bars. 
Then, $ \{ 0, 1 \} $ labels this bar with a binary label, which is either 0 or 1.  
Thus, $ \{ 0, 1 \} \times [m] \times R(m, \el) $ represents the set of all \BPA{}s with $ m $ bars, where one bar is given a binary label. 
Consider any $ X \in R(m,\el) $, where 1 bar has a binary label. 
Let $ B $ be the bar with the binary label. 
Then $ f $ acts on $ X $ as follows: 
\begin{itemize}

\item If $ B $'s binary label is 0, replace $ B $ with $ (\el + 1) $ in its own block. 
For example,

	$$ 123 \lbar{0}{} \mapsto 123, 4. $$

\item If $ B$'s binary label is 1 and there is a block directly to the left of $B$, remove $ B $ and adjoin $ (\el + 1) $ to that block. 
For example,

	$$ 123 \lbar{1}{} \mapsto 1234. $$

\item If $ B $'s binary label is 1 and there is not a block directly to the left of $B$, remove $ B $ to get a \BPA{} $ A $ with $ m - 1 $ bars. 
Then, either $ B $ was on the left end or $ B $ was directly to the right of a bar.
If $ B $ was on the left end, set $ f_0 (X) = 0 $. 
If $ B $ was directly after the $ k $-th bar from the left, set $ f_0(X) = k $. 
Define $ f(X) = (A, f_0(X)) $. 
For example,   

	$$ 123 \lbar{}{} \lbar{1}{} \mapsto ( 1 \lbar{}{}, (1) ). $$

\end{itemize}

We next show that we can invert $ f $. 
Suppose we are given $ Y \in R(m - 1, \el + 1) $. 
For $ f $ to map to $ Y $, $ f $ must have added $ (\el + 1) $. 
Thus, we first find $ (\el + 1) $ in $ Y $. 
If $ \el + 1 $ is in its own block, we replace it with a bar with binary label 0. 
By the definition of $ f $, this is the only \BPA{} that could and does map to $ Y $. 
If $ (\el + 1) $ belongs to a block with other elements, we remove it and place a bar with binary label 1 just to the right of this block. 
By the definition of $ f $, this is the only \BPA{} that could and does map to $ Y $. 
Hence, for each $ Y \in R(m - 1, \el + 1) $, there exists a unique $ X \in R(m, \el) $ with a binary label such that $ f(X) = Y $.

Now, suppose we are given $ (Z, a) \in R(m - 1, \el) \times ( 0 \cup [m - 1] ) $. 
For $ f $ to map to $ Z $, we must have removed a bar without inserting $\el+1$. 
If $ a = 0 $, place a bar on the left end of $ Z $ with binary label 1. 
If $ a \ne 0 $, place a bar just to the right of the $ a $-th bar from the left, with binary label 1. 
By the definition of $ f $, this is the only \BPA{} that could and does map to $ (Z,a) $. 
Hence, for each $ (Z, a) \in R(m - 1, \el) \times ( 0 \cup [m - 1] ) $, there exists a unique $ X \in R(m, \el) $ with a binary label such that $ f(X) = (Z, a) $. 
Thus $ f $ is invertible.

Since $f$ is a bijection, we conclude that 
\[
	| \{ 0, 1 \} \times [m] \times R(m,\el) | 
	= | R(m - 1, \el + 1) \cup \lp R( m - 1, \el) \times ( 0 \cup [m - 1] ) \rp |.
\]
Since the first union on the right-hand side is disjoint, we have
$$2m r_{m,\el} = r_{ m - 1, \el + 1} + m r_{ m -1, \el },$$
which completes the proof.
\end{proof}


\begin{thm}
\label{Main}
For $m\ge 1$, we can write $ r_{m,\el} $ in terms of the original sequence $ \{ r_{\el} \} $ as
\[
	r_{m, \el} = \f{ 1}{ 2^m m!} \sum_{ i = 0 }^{m} \cy{m + 1}{ i + 1} r_{ \el + i}. 
\]
\end{thm}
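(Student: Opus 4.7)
The plan is to prove Theorem \ref{Main} by induction on $m$, using Theorem \ref{Recurrence} to drive the induction and the classical recurrence
\[
\cy{n+1}{k} = n\cy{n}{k} + \cy{n}{k-1}
\]
for the unsigned Stirling numbers of the first kind (\GKP{} \cite{graham:concrete}, Section 6.1), together with the boundary conventions $\cy{n}{0}=0$ for $n\ge 1$ and $\cy{n}{k}=0$ for $k>n$, to collapse the coefficients.

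The base case $m=0$ reduces to $r_{0,\el}=\cy{1}{1}r_\el=r_\el$. For the inductive step, I would assume the formula at level $m-1$ and substitute the two instances
\[
r_{m-1,\el}=\f{1}{2^{m-1}(m-1)!}\sum_{i=0}^{m-1}\cy{m}{i+1}r_{\el+i},\qquad r_{m-1,\el+1}=\f{1}{2^{m-1}(m-1)!}\sum_{i=0}^{m-1}\cy{m}{i+1}r_{\el+i+1}
\]
into the recurrence $r_{m,\el}=\f{1}{2m}r_{m-1,\el+1}+\ha r_{m-1,\el}$. Pulling the common factor $\f{1}{2^m m!}$ to the front rewrites the right-hand side as
\[
\f{1}{2^m m!}\left[\sum_{i=0}^{m-1}\cy{m}{i+1}r_{\el+i+1}+m\sum_{i=0}^{m-1}\cy{m}{i+1}r_{\el+i}\right].
\]
Reindexing the first sum by $j=i+1$ and collecting the coefficient of each $r_{\el+i}$ gives $m\cy{m}{1}$ at $i=0$, $\cy{m}{i}+m\cy{m}{i+1}$ for $1\le i\le m-1$, and $\cy{m}{m}$ at $i=m$. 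Applying the Stirling recurrence at $(n,k)=(m,i+1)$ and invoking the boundary conventions reduces each of these three cases to $\cy{m+1}{i+1}$, completing the induction.

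The argument is essentially mechanical, so there is no real conceptual obstacle; the only point requiring care is checking that the two boundary indices $i=0$ and $i=m$ fit the same closed form $\cy{m+1}{i+1}$ as the interior indices, which they do precisely because $\cy{m}{0}=0$ and $\cy{m}{m+1}=0$. The authors promise a second, bijective, proof, which would be more illuminating but would require a direct combinatorial interpretation of each Stirling coefficient $\cy{m+1}{i+1}$, perhaps in terms of cycle structure on labels attached to bars; absent further structural hints I would begin with the inductive route just sketched and only then look for a bijection suggested by the resulting formula.
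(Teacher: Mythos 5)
Your proposal is correct and follows essentially the same route as the paper's first proof: induction on $m$ via Theorem \ref{Recurrence}, with the Stirling recurrence $\cy{m+1}{j+1}=\cy{m}{j}+m\cy{m}{j+1}$ and the boundary values $\cy{m}{0}=\cy{m}{m+1}=0$ collapsing the combined coefficients, exactly as you describe. The paper also supplies the second, bijective proof you anticipate (labeling bars with binary and order labels and reading off a permutation of $[m+1]$ by cycles), but your inductive argument is complete as it stands.
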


\begin{proof}
(Method 1: Induction using Theorem \ref{Recurrence}.) \\
\emph{Base Case:} For $ m = 0 $,
\[
	\f{ 1}{ 2^m m!} \sum_{ i = 0 }^{m} \cy{m + 1}{ i + 1} r_{ \el + i} = \f{ 1}{ 2^0 0!} \sum_{ i = 0 }^{0} \cy{1}{ i + 1} r_{ \el + i} = \cy{1}{1} r_\el = r_{0,\el}, 
\]
proving the result for $ m = 0 $. 
Now suppose $ m \ge 1 $.

\noi \emph{Inductive Hypothesis:} Assume the result holds for $ m - 1 $. 
That is,
\[
	r_{m - 1, k} = \f{ 1}{ 2^{m - 1} (m - 1)!} \sum_{ i = 0 }^{m - 1} \cy{m}{ i + 1} r_{k + i} 
\]
for all $ k \ge 0 $. From Theorem \ref{Recurrence}, 
\begin{align*}
	r_{ m, \el} & = \f{ 1}{ 2m} r_{ m - 1, \el + 1} + \ha r_{ m - 1, \el} \\
	    & = \f{ 1}{ 2m} \lp  \f{ 1}{ 2^{m - 1} (m - 1)!} \sum_{ i = 0 }^{m - 1} \cy{m}{ i + 1} r_{ \el + 1+ i} \rp + \ha \lp     \f{ 1}{ 2^{m - 1} (m - 1)!} \sum_{ i = 0 }^{m - 1} \cy{m}{ i + 1} r_{ \el + i}  \rp \\
	&= \f{1}{ 2^m m!} \lp \sum_{ j = 1}^m \cy{m}{j} r_{ \el + j} +  m \sum_{ j = 0}^{m - 1} \cy{m}{j + 1} r_{ \el + j} \rp. 
\end{align*}
Noticing that $ \cy{m}{0} = \cy{m}{m + 1} = 0 $ and combining the sums, 
\[
	r_{m, \el} = \f{1}{ 2^m m!} \lp \sum_{ j = 0}^{m} \lp \cy{m}{j} + m \cy{m}{j + 1} \rp r_{ \el + j} \rp.
\]
For the unsigned Stirling numbers of the first kind, we have $ \cy{m + 1}{j + 1} = \cy{m}{ j } +  m \cy{m}{j + 1} $ (see \GKP{} \cite{graham:concrete}, p.~250). 
Hence,
\[
	r_{ m, \el} = \f{1}{ 2^m m!} \lp \sum_{ j = 0}^{m} \cy{m + 1}{j + 1} r_{ \el + j} \rp.
\] \end{proof}

\begin{proof}

(Method 2: Bijective combinatorial proof.) \\

\noi We can iterate the map used to prove Theorem \ref{Recurrence} to establish this more general result. 
Let $ S_m $ denote the set of permutations of $ [m] $. 
Let $ C(n,k) $ denote the set of permutations of $ [n] $ with $ k $ cycles. 

We prove this result by establishing a bijection
\[
	g: \{ 0, 1 \}^m \times S_m \times R(m,\el) \to \bigcup_{ i = 0}^m C(m + 1, i + 1) \times R(\el + i).
\]
Here, $ \{ 0, 1 \}^m $ gives each of the $ m $ bars a binary label, 0 or 1. 
Also, $ S_m $ gives each of the $ m $ bars a distinct order label from $ [m] $. 
Thus, $ \{ 0, 1 \}^m \times S_m \times R(m,\el) $ represents the set of all BPAs with $ m $ bars where each bar has a binary and order label. 
When we refer to bar $ x $, we mean the bar with order label $ x $. 
Consider any \BPA{} $ X \in R(m,\el) $ with order and binary labels. 
Then, $ g $ acts on each bar in the order of increasing order labels just as before:
\begin{itemize} 

\item If its binary label is 0, replace it with the next integer not yet used in the \BPA{} in its own block.

\item If its binary label is 1 and there is a block directly left of the bar, remove it and adjoin the next integer not yet used in the \BPA{} in that  block.

\item If its binary label is 1 and there is not a block directly left of the bar, remove the bar.

\end{itemize}
After $ g $ has acted on all of the bars we end up with a \PA{} we shall call $ g_{PA}(X) $. 
Then, as we can add 0 or 1 elements for each of $ m $ bars, $ g_{PA}(X) \in R(0, \el + i) $ for some integer $ i, 0 \le i \le m $. But, $ g $ also yields a permutation $ g_C(X) $ of $ [m + 1] $, constructed as follows:
\begin{itemize}

\item Place an extra bar with order label $ (m + 1) $ at the left end of the \BPA. 
This extra bar will effectively act as the left end of the \BPA, and it will make our proof a bit more straightforward.

\item We define a third label, the cycle label, on the bars. 
Let $ c(x) $ denote the cycle label of bar $ x $. We initialize the cycle labels as the order label: $ c(x)= x $.

\item Whenever bar $ a $ is removed, there must have been a bar $ b $ directly left of it ($ b = m + 1 $ if $ a $ was at left end). 
After removing bar $ a $, we append the cycle label of $ a $ onto the end of that of bar $ b $:
\[
	c(b) \longleftarrow ( c(b) \, c(a) ).
\]
\item The first element of $ c(x) $ is always $ x $ because we always append to the end. 
Also, $ x $ is the maximum element of $ c(x) $ because all elements merged into $ c(x) $ must have been acted on by $ g $ before $ x $ and so must be less than $ x $. 

\item Whenever a bar, say bar $ y $, is replaced with the next number not used, either in its own block  or in the block directly to its left, make its cycle label $ c(y) $ a cycle in the permutation $ g_C(X) $. 

\item After all $ m $ bars are removed, remove the extra bar $ m + 1 $, and make its cycle label $ c(m + 1) $ a cycle in the permutation $ g_C(X) $. 

\end{itemize} 
Finally, we define 
\[
	g(X) = ( g_C(X), g_{PA}(X) ). 
\]
For the example below, we write the labels of the bars as follows:
\[
	\lbar{\text{ Binary \, Label }}{ \text{ Order Label (Cycle Label)} }	
\]
\begin{align*}
X = &\lbar{}{5(5)} \lbar{0}{3(3)} 1 2 \lbar{1}{4(4)} \lbar{1}{1(1)} \lbar{1}{2(2)} 3\\
\mapsto & \lbar{}{5(5)} \lbar{0}{3(3)} 1 2 \lbar{1}{4(41)}  \lbar{1}{2(2)} 3\\
\mapsto &\lbar{}{5(5)} \lbar{0}{3(3)} 1 2 \lbar{1}{4(412)} 3 \\
\mapsto &(3)\lbar{}{5(5)} 4, 1 2 \lbar{1}{4(412)} 3\\
\mapsto &(412)(3) \lbar{}{5(5)} 4, 1 2 5 , 3\\
\mapsto &(5)(412)(3) \, 4, 1 2 5,  3
\end{align*}
$$ g_C(X) = (5)(412)(3), \qquad g_{PA}(X) = 4, 1 2 5, 3. $$
Now, every time we substitute another number, we add one cycle to the permutation. 
We have 1 more cycle from extra bar $ m + 1 $. 
Hence, $ g_{PA}(X) \in R(\el + i) $ if and only if $ g_C(X) $ has $ i + 1 $ cycles, or $ g_C(X) \in C(m + 1, i + 1) $. 
Thus, as claimed, $ g $ is a map
\[
	g: \{ 0, 1 \}^m \times S_m \times R(m,\el) \to \bigcup_{ i = 0}^m C(m + 1, i + 1) \times R(\el + i).
\]

Next, we show that we can invert $ g$. 
Given permutation $ Y \in C(m + 1, i + 1) $ and \PA{} $ Z \in R(\el + i ) $ for $ 0 \le i \le m $, we find an $ X $ with its order and binary labels such that 
\[
	g(X) = (Y, Z).
\] 
We reconstruct such an $ X $ and show it is unique. 
We can add back the bars with their order and binary labels using the information contained in $ Y $. 
First, we write $ Y $ in terms of its cycles, with each cycle starting at its maximum. 
Also, we know that the largest $ i $ elements of $ Z $ must have been added by $ g $. 

By construction, each cycle represents a sequence of bar removals that each terminate in the addition of a new integer to the \BPA. 
By definition, a new integer does not replace bar $ x $  if and only if $ c(x) $ was appended to the end of another cycle label at some point. 
Thus, any bar that remains at the start of a cycle must have been replaced by the next integer not yet used. 
Furthermore, any bar not at the start of a cycle must have been removed without replacement of the next integer not yet used. 
Because the steps in the definition of $ g $ are ordered by increasing order labels, we know the order in which the cycles were created---in increasing order of their maxima. 
We also know the order in which the integers were added---increasing order. 
Note that the cycle containing $ (m + 1) $ corresponds to the extra bar labeled $ (m + 1) $ on the left end.  

Thus, by comparing the orders of the maxima in the cycles and the new integers created, we can uniquely determine which cycles correspond to which added integers. 
(The cycle containing $ (m + 1) $ does not correspond to an integer, but to the the left end of the \BPA.) 
The cycle and the corresponding integer are created simultaneously. 
If added integer $ y $ corresponds to cycle $ C  = (c_1 \, c_2 \, \cdots \, c_k) $. 
Then, we must have had the sequence of adjacent bars in this order: $ c_1, c_2, \cdots c_k $ just right of or at where $ y $ was added. 
To illustrate,
\[ 
	( \text{ Where $ y $ was added } ) \lbar{}{c_1} \lbar{1}{c_2} \lbar{1}{c_3}\dots \lbar{1}{c_k}.
\]
This holds because appending cycle labels preserves the order from left to right, and this cycle must have been created when $ y $ was added. 
Because bars $ c_2, c_3, \cdots c_k $ were removed without replacement, their binary labels must have been 1. We know $ c_1 $ was substituted with $ y $. 
So, by the definition of $ g $,
\begin{itemize}

\item If $ y $ is in its own block, bar $ c_1 $'s binary label must have been 0.

\item If $ y $ is in a block with other integers, bar $ c_1 $'s binary label must have been 1.

\end{itemize}  
Let the cycle containing $ (m +1) $ be $ ( (m + 1) \, d_1 \, d_2 \, \cdots \, d_k ) $. 
Then we must have had the sequence of adjacent bars:
\[
	(\text{ Left End } ) ( \lbar{}{m + 1} ) \lbar{1}{d_{1}} \lbar{1}{d_{2}} \cdots \lbar{1}{d_{k}}.
\]
We remove $ m + 1 $ before returning the final \BPA, $ X $.

We assume that the blocks are increasing. 
For the case of multiple integers added in a single block, we must order the cycles by increasing order of their maxima from left to right. 
Suppose that $ \{ n_i \}_{ i = 1}^r $, where the $ n_i $ increase with $ i $, are added in a block and have the corresponding cycles $ \{ C_i \}_{ i = 1}^r $, respectively. Let
\[
	C_i = ( c_{ i,1} \, c_{ i , 2} \, \cdots \, c_{ i, k_i} ).
\]
Then we must have
\[
	 ( \text{ Where $ \{ n_i \}_{ i = 1}^r $ were added } ) \lbar{}{c_{1,1}} \lbar{1}{c_{1,2}}\dots \lbar{1}{c_{1,k_1}} \lbar{}{c_{2,1}} \lbar{1}{c_{2,2}}\dots \lbar{1}{c_{2,k_2}} \dots \lbar{}{c_{m,1}} \lbar{1}{c_{m,2}}\dots \lbar{1}{c_{m,k_m}}.
\]
This must happen so that the bars in $ C_1, C_2, \cdots C_{i - 1} $ are removed before $ C_i $ and $ n_i $ are created. 
Let $ X $ be the \BPA{} constructed by this reversal. For example, suppose that we are given
\[
	Y = (83)(714)(65)(2), \qquad Z = 7, 123, 456.
\]
First, $ Y $ has 4 cycles, so 5, 6, and 7 were added. 
Then, $ (83) $ corresponds to the extra bar on the left end, $ (714) $ corresponds to 7, $ (65) $ corresponds to 6, and $ (2) $ corresponds to 5. 
With the extra bar, this must have came from
\[
	\lbar{}{8} \lbar{1}{3} \lbar{0}{7}  \lbar{1}{1}  \lbar{1}{4} 123, 4  \lbar{1}{2}  \lbar{1}{6}  \lbar{1}{5}.
\]
Removing the extra bar, we have
\[
	X = \lbar{1}{3} \lbar{0}{7}  \lbar{1}{1}  \lbar{1}{4} 123, 4  \lbar{1}{2}  \lbar{1}{6}  \lbar{1}{5}.
\]
We replaced the largest $ i $ elements of $ Z \in R(\el + i) $ with a sequence of bars, so $ X $ is a labeled BPA on $ [\el] $. 
Also, we added back one bar for each element of the domain of permutation $ Y $, except $ (m + 1) $, which corresponds to the extra bar. 
Thus, $ X $ has $ m $ bars, so $ X \in R(m, \el) $ with binary and order labels. 
Then, 
\[
	g(X) = (Y, Z)
\] 
because the sequences of adjacent bars added back for each cycle add the desired integers in $ Z $ and the desired cycles in $ Y $. 
And, such an $ X $ is unique because, as has been argued, the sequences of adjacent bars with their binary and order labels and their placement in the \BPA{} are unique. 
Hence, this $ X $ is unique. 
Since $ g $ is a bijection, we conclude that 
\[
	| \{ 0, 1 \}^m \times S_m \times R(m,\el) | 
	= \left\vert \bigcup_{ i = 0}^m C(m + 1, i + 1) \times R(\el + i) \right\vert. 
\]
Since the union on the right-hand side is disjoint, we have
$$2^m m! \,r_{m, \el} = \sum_{ i = 0 }^m \cy{m + 1}{ i + 1} r_{ \el + i },$$	
which completes the proof.
\end{proof}


\section{ Identities for Barred Preferential Arrangements }

We begin with a formula expressing $r_{m,\el}$ as a sum.
The formula
$$r_\el = \sum_{k=0}^\el \pa{\el}{k}\,k!$$
is implicit in the work of Touchard \cite{touch:pas}, and the formula
$$r_{1,\el} = \sum_{k=0}^\el \pa{\el}{k}\,(k+1)!$$
was established by Pippenger \cite{pipp:pas}.
We generalze these formulas as follows.

\begin{thm} 
\label{Stir1}
For $m\ge 0$ and $\el\ge 1$, we have
\[
	r_{ m,\el} = \sum_{ k = 0}^\el \pa{\el}{k} k! \mch{m + 1}{k},
\]
where $\pa{\el}{k}$ is the Stirling number of the second kind,
the number of partitions of $\el$ elements into $k$ blocks
(see \GKP{} \cite{graham:concrete}, Section 6.1),
and $\mch{n}{k} = \ch{n+k-1}{k}$ is the number of ways of choosing $k$ elements 
to form a multiset (repetitions are allowed, with multiplicities summing to $k$) from a set of $n$ distinct elements.
\end{thm}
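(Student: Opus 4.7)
The plan is to prove this directly by a combinatorial classification, summing over the number $k$ of blocks. The key observation is that a BPA on $[\ell]$ with $m$ bars decomposes into three independent pieces of data: the underlying partition of $[\ell]$ into blocks, the left-to-right ordering of those blocks, and the placement of the bars into the gaps determined by the ordered blocks.

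Concretely, I would fix $k$ with $0 \le k \le \ell$ and count BPAs having exactly $k$ blocks. First, choose the set partition of $[\ell]$ into $k$ nonempty blocks, giving $\pa{\ell}{k}$ options. Next, arrange these $k$ blocks in a linear sequence, giving $k!$ options. Finally, insert the $m$ bars among the resulting $k+1$ gaps (to the left of the first block, between two consecutive blocks, or to the right of the last block). Summing over $k$ then yields exactly the claimed formula.

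The only step that requires care is the last one, because bars are allowed to sit in the same gap with no block between them, so I am counting multisets rather than subsets. The sample arrangements $1||$, $|1|$, $||1$ on $[1]$ with two bars confirm this is the intended interpretation of BPA, and this is precisely why the sections are permitted to be empty. The number of ways to distribute $m$ indistinguishable bars among $k+1$ gaps is the multiset coefficient
\[
  \mch{k+1}{m} = \binom{m+k}{m} = \binom{m+k}{k} = \mch{m+1}{k},
\]
using the symmetry $\mch{a}{b} = \binom{a+b-1}{b} = \binom{a+b-1}{a-1} = \mch{b+1}{a-1}$ at the last step to match the form in the theorem statement.

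Putting the three factors together and summing over $k$ gives
\[
  r_{m,\ell} = \sum_{k=0}^{\ell} \pa{\ell}{k}\, k!\, \mch{m+1}{k},
\]
and since $\pa{\ell}{0}=0$ for $\ell\ge 1$ the $k=0$ term contributes nothing, consistent with the hypothesis $\ell \ge 1$. The main conceptual obstacle is simply convincing oneself that the three choices above correspond bijectively to BPAs; this is immediate once one records, for a given BPA, its partition of $[\ell]$, the order in which blocks appear, and the $(k+1)$-tuple counting how many bars lie in each gap.
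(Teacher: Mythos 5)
Your proposal is correct and follows essentially the same route as the paper: fix the number $k$ of blocks, count $\pa{\el}{k}$ set partitions, $k!$ orderings, and $\mch{k+1}{m}=\mch{m+1}{k}$ multiset placements of the bars in the $k+1$ gaps, then sum over $k$. The symmetry step $\mch{k+1}{m}=\binom{m+k}{m}=\binom{m+k}{k}=\mch{m+1}{k}$ is exactly the identity the paper uses.
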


\begin{proof}
Suppose that our \BPA{} has $ k $ blocks. 
First, we can partition $ [\el] $ into $ k $ unordered blocks in $ \pa{\el}{k} $ ways. 
Then, we can order these blocks in $ k! $ ways. 
Finally, we have $ (k + 1) $ positions before, between and after these blocks in which to place the $m$ bars, and each position can have zero or more  bars. 
Hence, we can place the $m$ bars in 
\[
	\mch{k + 1}{m} = \ch{m + k}{m} = \ch{m + k}{k} = \mch{ m + 1}{ k}
\] 
ways. 
Thus the number of \BPA{}s on $[\el]$ with $m$ bars and $k$ blocks is
\[
	\pa{\el}{k} k! \mch{m + 1}{k}.
\]
Summing over $ k $ completes the proof.
\end{proof}


Next we turn to the exponential generating function
$$r_m(z) = \sum_{\el\ge 0} {r_{m,\el} \, z^\el \over \el!}.$$
Cayley \cite{cayley:trees} derived the case $m=0$,
\begin{equation}
\label{eq:cayleygen}
	r(z) = {1\over 2-e^z},
\end{equation}
and Pippenger \cite{pipp:pas} gave the result for $m=1$,
$$r_1(z) = {1\over (2-e^z)^2},$$
We generalize these results as follows.

\begin{thm} 
\label{EGF2}
For $m\ge 0$, we have
\[
	r_m(z) = \sum_{ \el \ge 0} {r_{m, \el} \, z^\el \over \el!} = {1\over ( 2 - e^z )^{m + 1}}. 
\]
\end{thm}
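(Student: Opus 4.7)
My plan is to give a direct combinatorial derivation via the product formula for exponential generating functions. The paper's definition already tells us that ``each section is its own PA,'' so a BPA on $[\ell]$ with $m$ bars is exactly the data of an ordered partition of $[\ell]$ into $m+1$ (possibly empty) labeled sections $S_0,\ldots,S_m$, together with a PA on each $S_i$. Since $r_0 = 1$ correctly accounts for an empty section, the standard EGF product rule immediately yields
\[
r_m(z) = \sum_{\ell \ge 0} \frac{z^\ell}{\ell!} \sum_{(S_0,\ldots,S_m)} \prod_{i=0}^m r_{|S_i|} = r(z)^{m+1},
\]
and substituting Cayley's formula (\ref{eq:cayleygen}) $r(z) = 1/(2-e^z)$ gives the claim.

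If a more analytic derivation is preferred, one can instead start from Theorem \ref{Stir1}: interchange the order of summation in $\sum_\ell r_{m,\ell} z^\ell/\ell!$, apply the classical EGF identity $\sum_\ell \pa{\ell}{k} z^\ell/\ell! = (e^z-1)^k/k!$, and recognize the resulting series
\[
\sum_{k \ge 0} \binom{m+k}{k}(e^z-1)^k
\]
as the generalized binomial expansion of $\bigl(1-(e^z-1)\bigr)^{-(m+1)} = (2-e^z)^{-(m+1)}$. A third route inducts on $m$ using Theorem \ref{Recurrence}: multiplying the recurrence by $z^\ell/\ell!$ and summing over $\ell$ gives the differential relation $r_m(z) = \tfrac{1}{2m} r_{m-1}'(z) + \tfrac{1}{2} r_{m-1}(z)$, which one then checks is satisfied by $(2-e^z)^{-(m+1)}$ once the inductive hypothesis is assumed for $m-1$.

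There is no real obstacle in any of these approaches; the combinatorial proof in particular is essentially immediate from the definition. The only care needed is in the first route, to spell out cleanly that the product-of-PAs decomposition is a bijection and that the EGF product rule applies in this labeled setting, and in the third route, to verify the differential identity $\tfrac{d}{dz}(2-e^z)^{-m} = m e^z (2-e^z)^{-(m+1)}$ that drives the induction. I would present the combinatorial proof as the main argument, since it both explains conceptually why the exponent $m+1$ appears and subsumes the earlier special cases $m=0,1$ quoted from Cayley and Pippenger.
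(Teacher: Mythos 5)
Your main argument is correct and is essentially the paper's proof: the paper peels off section $0$ to get the binomial convolution $r_{m,\ell}=\sum_k\binom{\ell}{k}r_k\,r_{m-1,\ell-k}$, hence $r_m(z)=r_0(z)\,r_{m-1}(z)$, and then inducts on $m$, which is just the two-factor version of the $(m+1)$-fold product rule you invoke all at once. Your two alternative routes (summing Theorem \ref{Stir1} against $(e^z-1)^k/k!$, and the differential form of Theorem \ref{Recurrence}) are also valid, but the core decomposition into independent preferentially arranged sections is the same.
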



\begin{proof}
We can construct a \BPA{} on $[\el]$ with $m\ge 1$ bars and $k\ge 0$ elements before the first bar 
by (1) selecting the $k$ elements that appear before the first bar (this can be done in $\ch{\el}{k}$ ways), then (2) arranging these $k$ elements  in a \PA{} (this can be done in $r_k$ ways), and finally (3) arranging the remaining $\el-k$ elements in a \BPA{} with $m-1$ bars.
Summing over $k$ yields
$$r_{m,\el} = \sum_{k=0}^\el \ch{\el}{k} \, r_k \, r_{m-1,\el-k}.$$
If $u_\el$ and $v_\el$ are sequences with exponential generating functions
$u(z)$ and $v(z)$, respectively, then the sequence 
$w_\el = \sum_{0\le k\le \el} \choose{\el}{ k} \, u_k \, v_{\el-k}$ obtained from them by 
``binomial convolution'' has the exponential generating function $w(z) = u(z) \, v(z)$
(see \GKP{} \cite{graham:concrete}, p.~351, (7.74)).
Thus we obtain 
$$r_m(z) = r_0(z) \, r_{m-1}(z).$$
The theorem now follows from (\ref{eq:cayleygen}) by induction on $m$.
\end{proof}

We can use this generating function to provide another proof of Theorem \ref{Recurrence}.

\begin{proof}
By definition,
\[
	r_{ m - 1, \el + 1} + m r_{ m - 1, \el} 
	= \left[ \f{ z^{ (\el + 1) } }{ (\el + 1)!} \right] r_{m - 1}(z) 
	+ m \left[ \f{ z^{ \el } }{ \el!} \right] r_{m - 1}(z),
\]
where $[z^\el/\el!]$ denotes $\el!$ times the coefficient of $z^\el$ in what follows.
But differentiation of an exponential generating function shifts the sequence it generates down by 1. 
Thus
\[
	r_{ m - 1, \el + 1} + m r_{ m - 1, \el} = \left[ \f{ z^{ \el } }{ \el!} \right] r_{ m - 1}'(z) +  \left[ \f{ z^{ \el } }{ \el!} \right] m r_{m - 1}(z) =  \left[ \f{ z^{ \el } }{ \el!} \right] r_{ m - 1}'(z) + m r_{m - 1}(z).
\]
Since $ r_{m - 1}(z) = ( 2 - e^z )^{- m} $, we have
\begin{align*}
	r_{ m - 1, \el + 1} + m r_{ m - 1, \el} & = \left[ \f{ z^{ \el } }{ \el!} \right] \f{ m e^z}{ (2 - e^z)^{m + 1} } + \f{ m}{ (2 - e^z)^m } = m \left[ \f{ z^{ \el } }{ \el!} \right] \f{ e^z + ( 2 - e^z) }{  (2 - e^z)^{m + 1} } \\
	& = 2m  \left[ \f{ z^{ \el } }{ \el!} \right] \f{ 1 }{ (2 - e^z)^{m + 1} } = 2m \left[ \f{ z^{ \el } }{ \el!} \right] r_m(z) = 2m r_{ m, \el.}
\end{align*}
Thus,
\[
	2m r_{m,\el} = r_{ m - 1, \el + 1} + m r_{ m - 1, \el},
\]
which completes the proof. 
\end{proof}


\section{ Special Barred Preferential Arrangements }

In a \BPA, sections can be empty. 
What happens if we exclude those \BPA{}s with empty sections? 
How many will be left? 
We define a {\em\SBPA} to be a \BPA{} with no empty sections.
For example, the \BPA{}
\[
	14,3 | 26 | 7
\] 
is special, but the \BPA{}s
\[
	| 14,3 | 26 | 7 \qquad 14,3 | 26 | | 7
\]
are not, since sections 0 and 2, respectively, are empty. 
Let $ S(m,\el) $ be the set of \SBPA{}s on $ [\el] $ with $ m $ bars, and
let $ s_{ m,\el} = | S(m,\el) | $ be the number of such \SBPA{}s. 
If  $ \el = 0 $, we have no elements in $[\el]$, so at least one section is empty. 
Thus, for $ m \ge 0$, we have $s_{ m, 0} = 0, $ as opposed to $ r_{m,0} = 1 $. 

In this section,
we will prove various identities for $ s_{m,\el} $.
We begin with a formula expressing $s_{m,\el}$ as a sum. 

\begin{thm} 
For $m\ge 0$ and $\el\ge 1$, we have
\[
	s_{ m,\el} = \sum_{ k = 1}^\el \pa{\el}{k} k! \ch{k - 1}{m}. 
\]
\end{thm}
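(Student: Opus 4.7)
The plan is to mimic the proof of Theorem \ref{Stir1} by conditioning on the number of blocks of the \SBPA. Suppose the \SBPA{} on $[\el]$ has exactly $k$ blocks. First, partition $[\el]$ into $k$ unordered nonempty blocks in $\pa{\el}{k}$ ways. Next, order these $k$ blocks from left to right in $k!$ ways. Finally, place the $m$ bars among the ordered blocks so as to produce a \SBPA.

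The one substantive change from Theorem \ref{Stir1} is in the third step. Whereas an ordinary \BPA{} allows bars to sit at either end or to be stacked between two blocks, a \SBPA{} forbids any empty section. For $k$ ordered blocks, a bar cannot be placed to the left of the first block or to the right of the last block (those would empty sections $0$ or $m$), and no two bars may occupy the same gap between consecutive blocks (that would empty the intermediate section). Hence the $m$ bars must be placed in $m$ distinct gaps chosen from the $k-1$ gaps strictly between consecutive blocks, which can be done in exactly $\ch{k-1}{m}$ ways. Multiplying and summing over $k$ gives
\[
s_{m,\el} = \sum_{k=1}^{\el} \pa{\el}{k}\, k!\, \ch{k-1}{m}.
\]

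A couple of routine sanity checks round out the argument: the term $k=0$ may be omitted since $\pa{\el}{0}=0$ for $\el\ge 1$ (equivalently, a nonempty set has no partition into zero blocks), and the binomial coefficient $\ch{k-1}{m}$ automatically vanishes for $k\le m$, correctly reflecting the fact that a \SBPA{} with $m$ bars must have at least $m+1$ blocks. There is no real obstacle here; the only nontrivial point is noticing that the ``no empty section'' constraint converts the multiset-choice factor $\mch{m+1}{k}$ appearing in Theorem \ref{Stir1} into the ordinary binomial factor $\ch{k-1}{m}$, and this is an immediate stars-and-bars consideration once the placement problem is phrased as selecting gaps.
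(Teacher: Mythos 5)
Your proof is correct and follows essentially the same route as the paper's: condition on the number $k$ of blocks, count $\pa{\el}{k}\,k!$ ways to form and order them, and observe that the ``no empty section'' constraint forces the $m$ bars into distinct interior gaps, giving $\ch{k-1}{m}$. Your added sanity checks (the vanishing of the $k=0$ term and of $\ch{k-1}{m}$ for $k\le m$) are fine but not needed beyond what the paper states.
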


\begin{proof}
Suppose that our \SBPA{} has $ k $ blocks. 
First we can partition $ [\el] $ into $ k $ unordered blocks in $ \pa{\el}{k} $ ways.
 Then, we can order these blocks in $ k! $ ways. 
 Finally, we have $ (k - 1) $ positions between these blocks to place the $m$ bars, and each position can have at most 1 bar, because we can have no empty sections. 
 Thus, we can place our bars in $ \ch{ k - 1}{m} $ ways. 
 Since $1\le k\le m$, summing over $ k $ completes the proof.
\end{proof}


Next we turn to the exponential generating function
$$s_m(z) = \sum_{\el\ge 0} {s_{m,\el} \, z^\el \over \el!}.$$
We begin with the case $m=0$.

\begin{lemma}
\label{EGFs0}
We have
\[
	s_0(z)  = r(z) - 1 = \f{e^z- 1}{ 2 - e^z }.
\]
\end{lemma}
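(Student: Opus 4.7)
The plan is to observe that when $m=0$ there is only one section, namely section $0$, which is required to be nonempty. A \SBPA{} on $[\el]$ with no bars is therefore nothing more than an ordinary \PA{} on $[\el]$, provided that $[\el]$ itself is nonempty to prevent that single section from being empty. Consequently I expect to show that $s_{0,\el} = r_\el$ for $\el \ge 1$ and $s_{0,0} = 0$ (in contrast to $r_{0,0} = r_0 = 1$).

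Given this, the only difference between $s_0(z)$ and $r(z) = r_0(z)$ is the constant term: $r(z)$ contributes $r_0 = 1$ at $\el = 0$, while $s_0(z)$ contributes $0$. Therefore
\[
s_0(z) = \sum_{\el \ge 1} \frac{r_\el\, z^\el}{\el!} = r(z) - 1.
\]
Substituting the closed form $r(z) = 1/(2-e^z)$ from (\ref{eq:cayleygen}) and simplifying the resulting fraction gives
\[
s_0(z) = \frac{1}{2-e^z} - 1 = \frac{1 - (2-e^z)}{2-e^z} = \frac{e^z - 1}{2-e^z},
\]
as desired.

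There is no real obstacle here; the statement is essentially definitional once one recognizes that the special condition for $m=0$ reduces to requiring at least one element. The only thing to be careful about is the boundary case $\el = 0$, which is precisely what accounts for the $-1$ on the right-hand side.
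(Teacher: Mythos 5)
Your proof is correct and follows the same reasoning as the paper: for $m=0$ there is a single section, which is empty exactly when $\el=0$, so $s_0(z)$ differs from $r(z)$ only by the constant term $1$, and the closed form follows from (\ref{eq:cayleygen}).
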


\begin{proof}
If there are no bars, there is just one section, and this section will be empty if and only if $\el=0$.
Thus, $s_0(z)$ is obtained by omitting the constant term $r_{0,0}\cdot z^0 = 1$ from $r_0(z)$.
\end{proof}

\begin{lemma} 
\label{Convs}
For $ m \ge 1 $, we have
\[
	s_{m,\ell} = \sum_{k=0}^{\ell} \choose{\ell}{k}s_{m-1,k}s_{0,\ell-k}.
\]
\end{lemma}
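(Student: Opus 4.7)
The plan is to prove this by a combinatorial decomposition that mirrors the binomial convolution formula for exponential generating functions, exactly as was done for the (non-special) case in the proof of Theorem \ref{EGF2}. The idea is to ``peel off'' one section of a \SBPA{} and recognize the remainder as a smaller \SBPA{}.

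Specifically, given a \SBPA{} on $[\el]$ with $m\ge 1$ bars, I would split it at the last (rightmost) bar. Let $k$ be the number of elements lying in sections $0$ through $m-1$ (so $\el-k$ elements lie in section $m$, the section after the last bar). To reconstruct such a \SBPA{}, one (1) chooses the $k$-element subset of $[\el]$ to place in sections $0,\ldots,m-1$ in $\ch{\el}{k}$ ways, (2) arranges these $k$ elements into a \SBPA{} with $m-1$ bars (the $m$ sections $0,\ldots,m-1$ separated by $m-1$ bars, all required to be nonempty), contributing the factor $s_{m-1,k}$, and (3) arranges the remaining $\el-k$ elements of section $m$ as a \SBPA{} with $0$ bars (that is, a nonempty \PA), contributing the factor $s_{0,\el-k}$. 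Summing over $k$ from $0$ to $\el$ gives exactly the claimed formula.

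The only mildly delicate point is the boundary behavior: when $k=\el$ the would-be last section is empty, and when $k=0$ the would-be first block of sections has no elements to fill its $m$ sections. In both cases one of the factors $s_{m-1,0}$ or $s_{0,0}$ vanishes, so these terms contribute $0$ automatically and the sum is consistent with the ``no empty section'' requirement. This makes the range $k=0,\ldots,\el$ harmless and keeps the formula in standard binomial-convolution form.

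I do not expect any serious obstacle; the main thing is to verify that the decomposition is a genuine bijection between $S(m,\el)$ and the disjoint union $\bigsqcup_{k=0}^{\el}\bigl\{\text{$k$-subsets of }[\el]\bigr\}\times S(m-1,k)\times S(0,\el-k)$, which is immediate from the definition of a section. An alternative (which I would only sketch as a remark) is to translate the identity into the generating-function statement $s_m(z)=s_{m-1}(z)\,s_0(z)$ via the binomial convolution rule cited in Theorem \ref{EGF2}; this would set up an inductive computation of $s_m(z)$ in the next lemma.
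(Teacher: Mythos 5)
Your decomposition is exactly the paper's: both proofs split a special barred preferential arrangement at the rightmost bar, choose the $k$ elements lying before it, count the left part as a special arrangement with $m-1$ bars and the right part as a nonempty preferential arrangement, and note that the boundary terms vanish because $s_{m-1,0}=s_{0,0}=0$. The proposal is correct and matches the paper's argument essentially verbatim.
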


\begin{proof}
Consider a \SBPA{} of $ [\el] $ with $ m $ bars. 
Suppose there are $ k $ elements to the left of the rightmost bar. 
We first choose $k$ elements of $[\ell]$ to be left of the rightmost bar in $ \ch{\ell}{k} $ ways. 
We then arrange these elements with $ m - 1 $ bars into a \SBPA{} in $ s_{m - 1, k} $ ways. 
The final bar is placed right of this arrangement, and to the right of that we preferentially arrange the remaining $\ell - k$ elements with no bars in $ s_{0, \ell- k} $ ways. 
Summing over $ k $ completes the proof.
(If $m\ge 1$, then $k$ must satisfy $1\le k\le \el-1$, but the terms 
in the sum corresponding to $k=0$ and $k=\el$ vanish.)
\end{proof}


\begin{thm}
\label{EGFs}
For $m\ge 1$, we have
\[
	s_m(z) =  (r(z) - 1)^{ m + 1} = \left(\frac{e^z - 1}{2-e^z}\right)^{m+1}.
\]
\end{thm}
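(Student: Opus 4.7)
The plan is to prove this by induction on $m$, leveraging the two preceding lemmas. The key observation is that Lemma \ref{Convs} expresses $s_{m,\el}$ as a binomial convolution of $s_{m-1,k}$ with $s_{0,\el-k}$, and binomial convolution of sequences corresponds exactly to multiplication of their exponential generating functions (the same identity from \GKP{} \cite{graham:concrete}, p.~351, (7.74) that was invoked in the proof of Theorem \ref{EGF2}). So Lemma \ref{Convs} immediately translates to the functional recurrence
\[
    s_m(z) = s_{m-1}(z) \cdot s_0(z)
\]
valid for all $m \ge 1$.

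For the base case $m=1$, I would apply this recurrence once to get $s_1(z) = s_0(z)^2 = (r(z)-1)^2$, using Lemma \ref{EGFs0} for the value of $s_0(z)$. This matches the claimed formula $(r(z)-1)^{m+1}$ when $m=1$.

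For the inductive step, assuming $s_{m-1}(z) = (r(z)-1)^m$, the recurrence together with Lemma \ref{EGFs0} gives
\[
    s_m(z) = s_{m-1}(z) \cdot s_0(z) = (r(z)-1)^m \cdot (r(z)-1) = (r(z)-1)^{m+1},
\]
completing the induction. Substituting $r(z) = 1/(2-e^z)$ from (\ref{eq:cayleygen}) yields $(r(z)-1) = (e^z-1)/(2-e^z)$, which gives the second form of the identity.

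There is essentially no obstacle here: the hard work was done in setting up the convolution in Lemma \ref{Convs} and in identifying the base generating function in Lemma \ref{EGFs0}. The only minor care needed is to note that Lemma \ref{EGFs0} established $s_0(z) = r(z)-1$ as a fact about $s_0$ itself (not as an instance of the theorem being proved, which is stated only for $m \ge 1$), so there is no circularity in using it as the base case of the induction.
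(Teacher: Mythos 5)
Your proof is correct and follows the same route as the paper: both translate Lemma \ref{Convs} into the generating-function recurrence $s_m(z) = s_{m-1}(z)\,s_0(z)$ via binomial convolution and then induct on $m$ using Lemma \ref{EGFs0} as the base. Your version simply spells out the induction in more detail than the paper does.
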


\begin{proof}
By Lemma \ref{Convs}, $s_{m,\el}$ is obtained from $s_{m-1,k}$ and $s_{0,j}$ by binomial convolution, so we have $ s_m(z) = s_{ m - 1}(z) s_0(z)$. 
The theorem now follows from Lemma \ref{EGFs0} by induction on $m$.
\end{proof}


Our next result expresses $s_{m,\el}$ in terms of $r_{i,\el}$ for $0\le i\le m$.

\begin{thm}
\label{foo}
For $m\ge 0$ and $ \el \ge 1 $, we have
\[
	s_{m, \el} = \sum_{ i = 0}^m (-1)^{ m - i} \ch{m + 1}{  i + 1 } r_{i, \el}.
\]
\end{thm}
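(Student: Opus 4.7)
The plan is to exploit the exponential generating functions already in hand. By Theorem \ref{EGFs} (together with Lemma \ref{EGFs0} handling the case $m=0$), we have the compact identity $s_m(z) = (r(z) - 1)^{m+1}$, while Theorem \ref{EGF2} gives $r(z)^{i+1} = r_i(z)$ for $i\ge 0$. Since the desired identity is a linear relation among $\{s_{m,\el}\}_{\el\ge 1}$ and $\{r_{i,\el}\}_{\el\ge 1}$, it should fall out of expanding $(r(z)-1)^{m+1}$ by the binomial theorem and reading off coefficients of $z^\el/\el!$.

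Explicitly, I would first write
\[
	s_m(z) = \sum_{i=0}^{m+1} (-1)^{m+1-i} \ch{m+1}{i} r(z)^i.
\]
The $i=0$ term contributes only the constant $(-1)^{m+1}$, and for $1\le i\le m+1$ we replace $r(z)^i$ by $r_{i-1}(z)$. Reindexing with $j=i-1$ converts the sum into
\[
	s_m(z) = (-1)^{m+1} + \sum_{j=0}^{m} (-1)^{m-j} \ch{m+1}{j+1} r_j(z).
\]
Since $\el\ge 1$, the leading constant contributes nothing to $[z^\el/\el!]$, and extracting this coefficient on both sides yields the claimed formula.

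The main obstacle is essentially clerical: keeping the reindexing straight and confirming that the $i=0$ boundary term really does drop out when $\el\ge 1$. As a sanity check or alternative derivation, one can argue by inclusion-exclusion. For each subset $T\sube\{0,1,\ldots,m\}$ with $|T|=k$, the BPAs of $[\el]$ with $m$ bars in which every section indexed by $T$ is empty are counted by $r_{m-k,\el}$ (after collapsing the $k$ empty sections, one is left with $m+1-k$ sections separated by $m-k$ effective bars, and which specific sections are empty doesn't affect the count). Inclusion-exclusion over the events ``section $i$ is empty'' then gives
\[
	s_{m,\el} = \sum_{k=0}^{m+1} (-1)^k \ch{m+1}{k} r_{m-k,\el};
\]
for $\el\ge 1$ the $k=m+1$ term vanishes, and the substitution $i=m-k$ recovers the stated formula, providing independent verification of the generating-function computation.
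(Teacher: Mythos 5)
Your proposal is correct and follows essentially the same route as the paper: the paper's primary proof expands $s_m(z)=(r(z)-1)^{m+1}$ by the binomial theorem, substitutes $r(z)^{i+1}=r_i(z)$, and extracts the coefficient of $z^\el/\el!$ (noting the constant term drops for $\el\ge 1$), and it then gives exactly your inclusion--exclusion argument as a second proof. Both of your derivations match the paper's, so there is nothing to add.
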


\begin{proof} 
From Theorem \ref{EGFs},  
\begin{align*}
	s_m(z) & = (r(z) - 1)^{m + 1} = \sum_{ k = 0}^{ m + 1} \ch{m + 1}{k} (-1)^{ m + 1 - k } r(z)^k\\
            & = \sum_{ i = 0 }^m \ch{ m + 1}{ i + 1 } (-1)^{ m - i } r(z)^{ i + 1} + (-1)^{ m + 1}.
\end{align*}
Therefore, for all $ \el \ge 1 $,
\begin{align*}
	s_{m,\el} & = \left[ \f{z^\el}{\el!} \right] s_m(z) = \sum_{ i = 0 }^m \ch{ m + 1}{ i + 1 } (-1)^{ m - i } \left[ \f{z^\el}{\el!} \right] r(z)^{ i + 1} + \left[ \f{z^\el}{\el!} \right] (-1)^{ m + 1} \\
	& = \sum_{ i = 0 }^m (-1)^{ m - i } \ch{ m + 1}{ i + 1 } r_{ i, \el }.
\end{align*}
 \end{proof}
 
 We can use the principle of inclusion-exclusion to provide another proof of this theorem.
Let $ A_j $ denote the set of barred preferential arrangements in which section $ j $ is empty. 
Now, forming a \BPA{} in which all the sections $ j_1, j_2 \cdots j_i $ are empty is preferentially arranging $ [\el] $ among the other $ m + 1 - i $ sections, or equivalently a \BPA{} with $ m - i $ bars. 
Therefore,
\[
\left\vert \bigcap_{ k = 1}^i A_{ j_k} \right\vert = r_{ m - i, \el}
\]
It follows by the principle of inclusion-exclusion that the number of \SBPA{}s with $ m $ bars on $ [\el] $ is
\[
s_{m, \el} = \sum_{ i = 0}^m (-1)^{ i} \ch{m + 1}{ i } r_{m - i, \el} = \sum_{ i = 0}^m (-1)^{ m - i} \ch{m + 1}{ i + 1 } r_{i, \el}.
\]


The following theorem expresses $s_{m,\el}$ in terms of $r_{m,\el-j}$ for $0\le j\le \el$.

\begin{thm} For $m\ge 0$ and $ \el \ge 0 $, we have
\[
	s_{m,\ell} = (m+1)! \sum_{j=0}^\ell \choose{\ell}{j}\pa{j}{m+1}r_{m, \ell-j}.
\]
\end{thm}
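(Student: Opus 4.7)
The plan is to derive the identity at the level of exponential generating functions, exactly in the spirit of the proof of Theorem~\ref{EGF2}. Comparing Theorem~\ref{EGFs} with Theorem~\ref{EGF2}, we immediately obtain the factorization
\[
s_m(z) = (e^z - 1)^{m+1} \, r_m(z).
\]
The first step is to recognize the factor $(e^z-1)^{m+1}$ as a familiar EGF. Since the Stirling numbers of the second kind satisfy
\[
\sum_{j \ge 0} \pa{j}{m+1} \f{z^j}{j!} = \f{(e^z-1)^{m+1}}{(m+1)!}
\]
(see \GKP{} \cite{graham:concrete}, Section 7.4), the sequence $u_j = (m+1)!\,\pa{j}{m+1}$ has exponential generating function $(e^z-1)^{m+1}$.

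Next, I would apply the binomial convolution formula already invoked in the proof of Theorem~\ref{EGF2}: the product $u(z)\,r_m(z)$ generates the sequence
\[
\sum_{j=0}^{\el} \ch{\el}{j} u_j \, r_{m,\el-j} = (m+1)!\sum_{j=0}^{\el} \ch{\el}{j}\pa{j}{m+1} r_{m,\el-j},
\]
which must coincide with $s_{m,\el}$. That is the claim.

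There is essentially no obstacle here; the only mild verification is the boundary case $\el = 0$, where $s_{m,0}=0$ by definition, while $\pa{0}{m+1} = 0$ for $m\ge 0$ forces the right-hand side to vanish as well. A bijective proof also looks accessible: in a \SBPA{} on $[\el]$ with $m$ bars, distinguish the leftmost block of each of the $m+1$ sections, let $j$ be the number of elements lying in these distinguished blocks, and record the ordered surjection of those $j$ elements onto the $m+1$ sections ($(m+1)!\pa{j}{m+1}$ choices); the remaining $\el-j$ elements, together with the $m$ bars, form an arbitrary \BPA{} counted by $r_{m,\el-j}$, while the initial choice of which $j$ of the $\el$ elements are distinguished accounts for the $\ch{\el}{j}$ factor. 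However, the generating-function route is quicker and matches the style of the preceding arguments, so I would present it as the main proof.
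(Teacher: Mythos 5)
Your proposal is correct, but its main argument takes a genuinely different route from the paper's. The paper proves the identity by a direct count: choose the $j$ elements that will occupy the first block of each section ($\ch{\el}{j}$ ways), partition them into $m+1$ blocks ($\pa{j}{m+1}$ ways), assign those blocks to the $m+1$ sections ($(m+1)!$ ways) --- which is what forces every section to be nonempty --- and then distribute the remaining $\el-j$ elements among the sections as a \BPA{} with $m$ bars ($r_{m,\el-j}$ ways); this is precisely the bijection you sketch in your closing remarks. Your primary derivation instead reads the identity off the factorization $s_m(z)=(e^z-1)^{m+1}\,r_m(z)$ together with the exponential generating function $(e^z-1)^{m+1}/(m+1)!=\sum_{j\ge 0}\pa{j}{m+1}z^j/j!$ and binomial convolution, and this is a clean, valid argument; the one bookkeeping point is that Theorem \ref{EGFs} is stated only for $m\ge 1$, so for $m=0$ you should invoke Lemma \ref{EGFs0} (which gives the same formula $s_0(z)=(e^z-1)/(2-e^z)$) to justify the factorization in that case. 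As for what each approach buys: your generating-function route is shorter and reuses machinery already established in Theorems \ref{EGF2} and \ref{EGFs}, whereas the paper's counting argument is self-contained, does not depend on those earlier results, and explains combinatorially why the factor $(m+1)!\pa{j}{m+1}$ appears (it counts the surjections of the $j$ distinguished elements onto the $m+1$ sections). Your verification of the boundary case $\el=0$ is also correct.
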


\begin{proof} 
Consider a \SBPA{} of $ [\el] $ with $ m $ bars. 
Suppose there are $ j $ elements in the first blocks of all sections. First, we choose $j$ elements from $[\ell]$ that will be in first blocks of sections in $ \ch{\ell}{j} $ ways. 
We then partition these $j$ elements into $m+1$ blocks, in $ \pa{j}{m + 1} $ ways. 
We order these blocks to assign them to $m+1$ sections in $ (m + 1)! $ ways. 
Now we know that each section has at least one block and is thus not empty. 
The final step is arranging the remaining $\ell - j$ elements into $m+1$ sections in $r_{m, \ell-j}$ ways. 
Summing $j$ over the range $0 \le j \le \el $ completes the proof.
\end{proof}


The following results express $r_{m,\el}$ in term of $s_{i,\el}$ for $0\le i\le m$.

\begin{lemma}
\label{kES}
The number of \BPA{}s on $ [\el] $ with $ m $ bars and exactly $ k $ empty sections is 
\[
	\ch{m + 1}{k} s_{ m - k, \el}
\]
for $ \el \ge 1 $. 
\end{lemma}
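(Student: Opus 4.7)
The plan is to give a direct bijective argument. A BPA on $[\el]$ with $m$ bars has $m+1$ sections, namely sections $0,1,\ldots,m$. To specify a BPA in which exactly $k$ of these sections are empty, I would first choose the set $T\sube\{0,1,\ldots,m\}$ of $k$ positions that are to be empty, and then fill in the remaining $m+1-k$ positions with \PA{}s (one per position) so that each is nonempty.

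The first choice can be made in $\ch{m+1}{k}$ ways. For the second step, I claim that specifying the contents of the $m+1-k$ nonempty sections, in the order they occur, is the same data as giving a \SBPA{} of $[\el]$ with $m-k$ bars. Indeed, a \SBPA{} with $m-k$ bars has exactly $(m-k)+1=m+1-k$ sections, all of which are required to be nonempty; reading its sections from left to right gives a sequence of $m+1-k$ nonempty \PA{}s of disjoint blocks of $[\el]$, which is exactly what I need to insert into the non-$T$ positions of the original BPA. Conversely, given the ``empty-position'' set $T$ and a \SBPA{} with $m-k$ bars, I reconstruct the BPA by dropping the \SBPA{}'s sections into the non-$T$ positions in order and leaving the $T$ positions empty; this evidently has $m$ bars and exactly $k$ empty sections (those indexed by $T$), and the process is reversible.

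Combining the two independent choices gives $\ch{m+1}{k}\,s_{m-k,\el}$ as claimed. The assumption $\el\ge 1$ is used only to ensure that the notion of ``\SBPA'' behaves normally for the factor $s_{m-k,\el}$; note that $s_{m-k,\el}=0$ whenever $m-k+1>\el$, which correctly reflects the impossibility of having more than $\el$ nonempty sections.

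The argument is elementary and I do not anticipate a serious obstacle; the only point requiring care is the claim that ``the nonempty sections in order form a \SBPA{} with $m-k$ bars.'' This is really just the observation that the $m-k$ bars separating consecutive nonempty sections are precisely what turn the sequence of nonempty section contents into a \BPA{}, and by construction every section of this smaller \BPA{} is nonempty, which is the definition of a \SBPA{}.
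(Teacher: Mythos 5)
Your argument is correct and is essentially the same as the paper's: choose the $k$ empty positions in $\ch{m+1}{k}$ ways, then observe that filling the remaining $m+1-k$ positions with nonempty sections is exactly the data of a \SBPA{} with $m-k$ bars. You simply spell out the bijection between these fillings and $S(m-k,\el)$ in slightly more detail than the paper does.
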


\begin{proof}

First, we must choose the $ k $ particular sections that will be empty. 
Then, we can have no other empty sections. So, we preferentially arrange $ [\el] $ into the remaining $ m - k + 1 $ sections, equivalent to $ m - k $ bars, so that we have no other empty sections. 
By definition, we can do this in $ s_{ m - k, \el } $ ways. Hence, there are 
\[
	\ch{m + 1}{k} s_{ m - k, \el}
\]
\BPA{}s with exactly $ k $ empty sections. 
\end{proof}

\begin{cor} For $m\ge 0$ and $ \el \ge 1 $, we have
\[
	r_{ m, \el} = \sum_{ k = 0}^{ m} \ch{ m + 1}{k + 1} s_{ k, \el }. 
\]
\end{cor}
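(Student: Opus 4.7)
The plan is to classify barred preferential arrangements according to how many of their sections are empty, and then invoke Lemma \ref{kES} directly. Given a \BPA{} on $[\el]$ with $m$ bars, the number of empty sections $j$ lies in the range $0 \le j \le m$: indeed, since $\el \ge 1$, at least one element must appear somewhere, so at most $m$ of the $m+1$ sections can be empty. Thus partitioning $R(m,\el)$ by number of empty sections gives a disjoint decomposition.

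Applying Lemma \ref{kES}, the number of \BPA{}s on $[\el]$ with $m$ bars and exactly $j$ empty sections equals $\binom{m+1}{j}s_{m-j,\el}$. Summing this count over $j = 0, 1, \ldots, m$ yields
\[
r_{m,\el} = \sum_{j=0}^{m}\binom{m+1}{j}s_{m-j,\el}.
\]
Finally I would reindex with $k = m - j$, using the symmetry $\binom{m+1}{m-k} = \binom{m+1}{k+1}$, to obtain the stated form
\[
r_{m,\el} = \sum_{k=0}^{m}\binom{m+1}{k+1}s_{k,\el}.
\]

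There is essentially no obstacle: Lemma \ref{kES} has done all the real combinatorial work, so the corollary reduces to a summation and a single reindexing step. The only thing worth being careful about is verifying that the hypothesis $\el \ge 1$ of Lemma \ref{kES} is available for every term in the sum (it is, since the hypothesis of the corollary is also $\el \ge 1$), and that the $j = m$ term, which counts arrangements in which all but one section is empty, is correctly given by $\binom{m+1}{m}s_{0,\el} = (m+1)\,s_{0,\el}$ rather than being somehow degenerate.
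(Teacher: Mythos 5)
Your proof is correct and follows the same route as the paper's: partition $R(m,\el)$ by the number of empty sections, apply Lemma \ref{kES} to each class, and sum (the paper simply writes the reindexed form directly, while you make the substitution $k = m - j$ and the symmetry $\binom{m+1}{m-k} = \binom{m+1}{k+1}$ explicit). No gaps.
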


\begin{proof} 
For $ \el \ge 1 $, any \BPA{} must have from 0 to $ m $ empty sections. 
From Theorem \ref{kES}, the number of \BPA{}s with $ k $ empty sections is 
$ \ch{m + 1}{k} s_{ m - k, \el} $. 
Summing over $ k $ gives
\[
	r_{ m, \el} = \sum_{ k = 0}^{ m} \ch{ m + 1}{k + 1} s_{ k, \el }. 
\]
\end{proof}


\section{Convergent and Asymptotic Series}

Gross \cite{gross:pas} showed that $r_\el$ is the sum of the infinite series
\begin{equation}
	r_\el = {1\over 2}\sum_{k\ge 0} {k^\el \over 2^k}.
	\label{eq:infsum}
\end{equation}
We can generalize this result to $r_{m,\el}$ as follows.

\begin{thm} 
For $m\ge 1$ and $\el\ge 0$, we have
\[
	r_{m, \el} 
	= \f{ 1}{ 2^{ m + 1} \,m!} \sum_{ k \ge 0 } {(k + 1)^{\overline{m}}\, k^{\el} \over 2^{k}},
\] 
where $x^{\overline{m}} = x(x+1)\cdots(x+m-1)$. 
\end{thm}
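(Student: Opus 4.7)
The plan is to derive this identity directly from the closed-form exponential generating function established in Theorem \ref{EGF2}, namely
\[
r_m(z) = \frac{1}{(2-e^z)^{m+1}}.
\]
First I would pull out a factor of $2$ to write
\[
r_m(z) = \frac{1}{2^{m+1}} \cdot \frac{1}{\left(1 - e^z/2\right)^{m+1}}.
\]
Then I would apply the generalized binomial series $(1-x)^{-(m+1)} = \sum_{k\ge 0} \binom{m+k}{m} x^k$ with $x = e^z/2$, which converges absolutely provided $|e^z| < 2$, in particular in some neighborhood of $z=0$. This gives
\[
r_m(z) = \frac{1}{2^{m+1}} \sum_{k\ge 0} \binom{m+k}{m} \frac{e^{kz}}{2^k}.
\]

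Next I would recognize the binomial coefficient as a rising factorial:
\[
\binom{m+k}{m} = \frac{(k+1)(k+2)\cdots(k+m)}{m!} = \frac{(k+1)^{\overline{m}}}{m!}.
\]
Substituting the Taylor expansion $e^{kz} = \sum_{\ell\ge 0} k^\ell z^\ell/\ell!$ and interchanging the two sums yields
\[
r_m(z) = \frac{1}{2^{m+1} m!} \sum_{\ell\ge 0} \left( \sum_{k\ge 0} \frac{(k+1)^{\overline{m}} k^\ell}{2^k} \right) \frac{z^\ell}{\ell!}.
\]
Extracting the coefficient of $z^\ell/\ell!$ and comparing with the definition of $r_m(z)$ gives the claimed formula.

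The one genuinely technical step is the interchange of summation, and this is the place I would take the most care. It is justified by absolute convergence: for $z$ in a small real neighborhood of $0$ with $e^{|z|}/2 < 1$, the double sum $\sum_{k,\ell} \binom{m+k}{m} k^\ell |z|^\ell/(\ell! \, 2^k)$ equals $\sum_k \binom{m+k}{m} e^{k|z|}/2^k$, which converges by the same binomial series argument applied at $|z|$. Once the interchange is legitimate, matching coefficients of $z^\ell/\ell!$ is automatic; note also that for fixed $m$ and $\ell$ the inner series $\sum_{k\ge 0} (k+1)^{\overline{m}} k^\ell/2^k$ converges, since the numerator is polynomial in $k$ while the denominator grows exponentially, so the resulting identity is a genuine convergent series representation. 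The case $m=0$ already appears in Gross's formula (\ref{eq:infsum}) and serves as a sanity check: with $m=0$ we have $(k+1)^{\overline{0}} = 1$, and the formula collapses to $r_\ell = \frac{1}{2}\sum_{k\ge 0} k^\ell/2^k$, as desired.
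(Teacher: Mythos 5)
Your argument is correct, but it follows a genuinely different route from the paper's. The paper proves this theorem by substituting Gross's series $r_\el = \frac{1}{2}\sum_{k\ge 0}k^\el/2^k$ into the linear combination of Theorem \ref{Main} and then collapsing the Stirling numbers of the first kind with the identity $\sum_{i}\cy{m+1}{i+1}k^{i+1} = k^{\overline{m+1}} = k\,(k+1)^{\overline{m}}$; it thus leans on the main combinatorial result of Section 2. You instead go straight to the generating function of Theorem \ref{EGF2}, expand $(2-e^z)^{-(m+1)}$ by the negative binomial series in powers of $e^z/2$, identify $\ch{m+k}{m} = (k+1)^{\overline{m}}/m!$, and extract coefficients; the only input is the closed form of $r_m(z)$, and your $m=0$ sanity check recovers Gross's formula rather than assuming it. Your justification of the interchange of summation by absolute convergence for $e^{|z|}<2$ is the right thing to say and is, if anything, more careful than the paper, which interchanges its (finite) sum over $i$ with the infinite sum over $k$ without comment. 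The trade-off: the paper's route makes the rising factorial appear as a consequence of a Stirling-number identity and ties the result to Theorem \ref{Main}, while yours is more self-contained and explains the rising factorial by the elementary fact that $\ch{m+k}{m}$ is a polynomial in $k$ of exactly that form.
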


\begin{proof}

Substituting (\ref{eq:infsum}) in Theorem \ref{Main}, we have
\begin{align*}
	r_{ m, \el} 
	&= \f{ 1}{ 2^{ m} \,m!} \sum_{ 0\le i\le m} \cy{ m + 1}{ i + 1} r_{ \el + i} \\
	 &= \f{ 1}{ 2^{ m + 1} \, m!} \sum_{ 0\le i\le m} \cy{ m + 1}{ i + 1} 
	 \sum_{ k \ge 0 } {k^{\el + i} \over 2^{k}}.
\end{align*}
Interchanging the order of summation and using the identity
$\sum_{1\le r\le s} \cy{r}{s} x^s = x^{\overline{s}}$
(see \GKP{} \cite{graham:concrete}, p.~250), we obtain
\begin{align*}
	r_{m, \el} 
	&= \f{ 1}{ 2^{ m + 1} \, m!} \sum_{ k \ge 0 } {k^{\el -1} \over 2^{k}} 
	\sum_{0\le i\le m} \cy{ m + 1}{ i + 1} k^{ i + 1} \\
	&= \f{ 1}{ 2^{ m + 1} \, m! } \sum_{ k \ge 0 } {k^{\el - 1} \, k^{\overline{m + 1}} \over 2^{k}} \\
	& = \f{ 1}{ 2^{ m + 1} \, m!} \sum_{ k \ge 0 } {(k + 1)^{\overline{m}} \, k^{\el} \over 2^{k}}. 
\end{align*}
\end{proof}

Gross \cite{gross:pas} pointed out that (\ref{eq:infsum}) implies the asymptotic formula
\begin{equation}
	r_\el \sim {\el! \over 2(\log 2)^{\el+1}}.
	\label{eq:asymp}
\end{equation}
To see this, observe that replacing the sum in (\ref{eq:infsum}) by an integral 
$\int_0^\infty x^\el \, 2^{-x}\, dx$ introduces an error that is at most the total variation of the integrand.
Since the integrand is unimodal, rising from $0$ to a maximum of $(\el / e\log 2)^\el$ at
$x = \el/\log 2$, then decreasing to $0$, the total variation is just twice the maximum.
The integral is $\int_0^\infty x^\el \, 2^{-x}\, dx = \int_0^\infty y^\el \, e^{-y} \, dy / (\log 2)^{\el+1} = 
\el!/(\log 2)^{\el+1}$.
The error is at most $2(\el / e\log 2)^\el \sim \el!/(2\pi \el)^{1/2} \, (\log 2)^\el$
(because by Stirling's formula $\el! \sim (2\pi \el)^{1/2} \, \el^\el \, e^{-\el}$).
Together these results yield (\ref{eq:asymp}).

Combining (\ref{eq:asymp}) with Theorem \ref{Main}, we obtain
\[
	r_{m,\el} \sim {(\el+m)! \over 2^{m+1} \, m! \, (\log 2)^{\el+m+1}}
\]
as $\el\to\infty$ with $m\ge 0$ fixed (because the sum in Theorem \ref{Main} is
dominated by the term with $i=m$).

Gross \cite{gross:pas} observed that the exponential generating function (\ref{eq:cayleygen}) can be used to obtain more precise asymptotic information concerning $r_\el$.
His argument leads to the estimate 
\begin{equation}
	r_\el
	= {\el! \over 2(\log 2)^{\el+1}} + O\left( {\el\,\el! \over ((\log 2)^2 + 4\pi^2)^{(\el+1)/2}} \right),
	\label{eq:asymp1}
\end{equation}
in which the error term is exponentially smaller than the main term (whereas the error term
given by the argument following (\ref{eq:asymp}) is smaller only by a factor of $O(1/\el^{1/2})$).
His argument is as follows.
The function $r(z)$ is analytic except for simple poles at the points $\log 2 + 2\pi i k$ with $k\in\Z$.
Setting $t(z) = r(z) - 1/2(\log 2 - z)$ yields a function with the same poles as $r(z)$, except that the pole closest to the origin (at $z=\log 2$) has been cancelled. 
The term $r_\el$ (which is $\el!$ times the coefficient of $z^\el$ in $r(z)$) differs from 
$\el!/2(\log 2)^{\el+1}$ (which is $\el!$ times the coefficient of $z^\el$ in $1/2(\log 2 - z)$)
by $\el!$ times the coefficient of $z^el$ in $t(z)$.
By the residue theorem (see \WW{} \cite{whittaker:modern}, Chapter VI), this difference is $(\el!/2\pi i)\oint (t(z)/z^{\el+1})\,dz$, where the integral is taken counterclockwise around any contour that encircles the origin, but does not encircle any other singularity of the integrand.
The other singularities of the integrand closest to the origin are the simple poles at
$z=\log2 \pm 2\pi i$, which are at distance $\rho = ((\log 2)^2 + 4\pi^2)^{1/2}$ from the origin.
Taking the contour to be a circle centered at the origin with radius $\rho - 1/\el$, we find that 
$t(z) = O(\el)$ on the circle, so the integrand is $O(\el/\rho^{\el+1})$.
Since the length of the contour is $O(1)$, we conclude that the error term is 
$O(\el\,\el!/((\log 2)^2 + 4\pi^2)^{(\el+1)/2})$, which yields (\ref{eq:asymp1}).

It is clear that better and better asymptotic formulas may be obtained by canceling more and  more poles of $r(z)$ and integrating the result around larger and larger circles.
When this is done, it is seen that the contributions of the successive pairs of poles form a convergent series, and this naturally raises the question of whether $r_l$ can be expressed as the sum of an infinite series that is convergent and also asymptotic (so that the error committed by truncating the series after any term is bounded by a constant times the first neglected term).
That this is so is the substance of the following theorem. 
(The situation here is reminiscent of the asymptotic series for the partition function 
that was discovered by Hardy and Ramanujan \cite{hardy:asymp}.
Rademacher \cite{rad:1,rad:2} later showed that by slightly altering the terms of the series,
it could be made convergent as well as asymptotic.)

\begin{thm}
\label{Inf2}
For $\el\ge 1$, we have
\begin{equation}
	r_{\el} = \f{ \el!}{2} \sum_{ k \in \Z } {1\over z_k^{ \el + 1} },
	\label{eq:main1}
\end{equation}
where $ z_k = \log 2 + 2 \pi k i $. 
This can be rewritten in terms of real quantities as
\begin{equation}
	r_{ \el} = \f{ \el!}{ 2 (\log 2)^{ \el + 1} } 
	+\sum_{ k \ge 1 } { \el!  \over ((\log 2)^2 + 4 \pi^2 k^2)^{( \el + 1)/2 }} \, 
	T_{ \el + 1} \lp \f{ \log 2 }{ \sqrt{ \log^2 2 + 4 \pi^2 k^2 } } \rp,
	\label{eq:main2}
\end{equation}
where $ T_n(x) $ is the $n$-th Chebyshev polynomial, defined by $\cos(n\theta) = T_n(\cos\theta)$. 
\end{thm}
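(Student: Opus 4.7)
The plan is to recover $r_\el$ from its exponential generating function $r(z) = 1/(2-e^z)$ (Theorem~\ref{EGF2} with $m=0$) via the residue theorem, cancelling \emph{all} of its poles at once rather than only the one closest to the origin as was done following (\ref{eq:asymp1}). The function $r(z)$ has simple poles exactly at $z_k = \log 2 + 2\pi k i$ for $k \in \Z$, each with residue $-1/e^{z_k} = -1/2$; consequently $r(z)/z^{\el+1}$ has residue $-1/(2z_k^{\el+1})$ at $z_k$ and residue $r_\el/\el!$ at $z=0$.

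I would apply the residue theorem on the circles $\Gamma_N = \{|z|=R_N\}$ with $R_N = 2\pi(N+\f12)$. This choice keeps every $\Gamma_N$ uniformly away from every pole: on the part of $\Gamma_N$ inside the vertical strip $|\Re z - \log 2|\le 1$, the imaginary part $y = \pm\sqrt{R_N^2 - x^2}$ differs from $\pm R_N$ by only $O(1/R_N)$, while $R_N$ itself lies at distance $\pi$ from the nearest integer multiple of $2\pi$. Enclosing the poles $z_k$ with $|k|\le N$ together with the pole at $0$, the residue theorem gives
\[
\oint_{\Gamma_N} \f{r(z)}{z^{\el+1}}\,dz
= 2\pi i \lp \f{r_\el}{\el!} - \f{1}{2}\sum_{|k|\le N} \f{1}{z_k^{\el+1}} \rp,
\]
so (\ref{eq:main1}) will follow once I show that the left-hand side tends to $0$ as $N\to\infty$.

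The main obstacle is a uniform bound $|r(z)|\le M$ on $\bigcup_N \Gamma_N$. In the region $\Re z \le \log 2 - 1$ one has $|e^z|\le 2/e$, hence $|2-e^z|\ge 2-2/e$; in $\Re z \ge \log 2 + 1$ one has $|e^z|\ge 2e$, hence $|2-e^z|\ge 2e-2$. In the remaining strip, the choice of $R_N$ keeps $y \bmod 2\pi$ near $\pi$, so $\cos y$ is close to $-1$ and
\[
|2-e^z|^2 = 4 - 4e^x\cos y + e^{2x} \ge 4.
\]
Given such an $M$, the $ML$-inequality yields $\bigl|\oint_{\Gamma_N}r(z)/z^{\el+1}\,dz\bigr| \le 2\pi M R_N /R_N^{\el+1} \to 0$ since $\el\ge 1$, while the sum in (\ref{eq:main1}) converges absolutely because $|z_k|\ge 2\pi|k|$ and $\el+1\ge 2$.

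To derive (\ref{eq:main2}) from (\ref{eq:main1}) I would pair the terms $k$ and $-k$. Since $z_{-k}=\ol{z_k}$, writing $z_k = \rho_k e^{i\phi_k}$ with $\rho_k = \sqrt{(\log 2)^2 + 4\pi^2 k^2}$ and $\cos\phi_k = \log 2/\rho_k$ gives
\[
\f{1}{z_k^{\el+1}}+\f{1}{z_{-k}^{\el+1}} = \f{2\cos((\el+1)\phi_k)}{\rho_k^{\el+1}} = \f{2\,T_{\el+1}(\log 2/\rho_k)}{\rho_k^{\el+1}}
\]
by the defining identity $T_n(\cos\phi)=\cos(n\phi)$. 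Combining these paired terms with the $k=0$ contribution $1/(\log 2)^{\el+1}$ converts (\ref{eq:main1}) into (\ref{eq:main2}).
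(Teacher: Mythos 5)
Your proof is correct and follows essentially the same route as the paper: apply the residue theorem to $r(z)/z^{\el+1}$ over a sequence of expanding contours that avoid the poles $z_k$, show the integral vanishes in the limit via a uniform bound on $|r(z)|$, and then pair the conjugate terms $k$ and $-k$ to obtain the Chebyshev form. The only difference is cosmetic --- you use circles of radius $2\pi(N+\tfrac12)$ where the paper uses rectangles with horizontal sides at $\Im z = \pm(2N+1)\pi$, and your three-region justification of the uniform bound $|2-e^z|\ge c>0$ is actually more carefully argued than the paper's bare assertion that $|r(z)|\le 1$ on its contour.
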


\begin{proof}
Suppose $ \el\ge 1 $. 
For $N\ge 0$, consider the rectangle $ A_N $ in the complex plane, where $ A_N $ has vertices $ \pm (2N + 1) \pm (2N + 1) \pi i $. Consider the contour integral
\[
	I_N = \oint_{A_N} \f{ r(z)}{ z^{ \el + 1 } } \, dz,
\]
where $ A_N $ is traversed counterclockwise. 

\begin{center}

\begin{tikzpicture}
\coordinate (1) at ( 0,0) {};
\coordinate (15) at (2.1,1.0) {};
\coordinate [label=right:$ (2N + 1) + (2N + 1) \pi i $] (2) at ( 2.1,2.1) {};
\coordinate [label=left:$ - (2N + 1) + (2N + 1) \pi i $] (3) at ( -2.1,2.1) {};
\coordinate [label=left:$ - (2N + 1) - (2N + 1) \pi i $] (4) at ( -2.1, -2.1) {};
\coordinate [label=right:$ (2N + 1) - (2N + 1) \pi i $] (5) at ( 2.1,-2.1) {};
\coordinate (6) at (0.3,0) {};
\coordinate (7) at (0.3, 0.6) {};
\coordinate (8) at (0.3, 1.2) {};
\coordinate (9) at (0.3, 2.4) {};
\coordinate (10) at (0.3, 1.8) {};
\coordinate (11) at (0.3, -0.6) {};
\coordinate (12) at (0.3, -1.2) {};
\coordinate (13) at (0.3, -1.8) {};
\coordinate (14) at (0.3, -2.4) {};
\draw (2) -- (3) ;
\draw (3) -- (4)  ;
\draw (4) -- (5) ;
\draw[->] (5) -- (15) ;
\draw (15) -- (2);
\draw[->] (1) -- (3,0);
\draw[->] (1) -- (0,3);
\draw[->] (1) -- (-3,0);
\draw[->] (1) -- (0,-3);
\draw [fill = black] (1) circle (0.075);
\draw [fill = black] (6) circle (0.075);
\draw [fill = black] (7) circle (0.075);
\draw [fill = black] (8) circle (0.075);
\draw [fill = black] (9) circle (0.075);
\draw [fill = black] (10) circle (0.075);
\draw [fill = black] (11) circle (0.075);
\draw [fill = black] (12) circle (0.075);
\draw [fill = black] (13) circle (0.075);
\draw [fill = black] (14) circle (0.075);

\end{tikzpicture}

\end{center}

\noi The integrand has singularities at $ z_k = \log 2 + 2 \pi k i $ for all $ k \in \Z $ and at $ 0 $. 
As $ N \to\infty$, $A_N $ eventually encloses all of these singularities. 
Thus, by the residue theorem,
\[
	\f{1}{ 2 \pi i } \lim_{ N \to \8} I_N = \left[ \Res \lp \f{ r(z)}{ z^{ \el + 1 } } ; 0 \rp + \sum_{ k \in \Z } \Res \lp \f{ r(z)}{ z^{ \el + 1 } } ; z_k \rp, \right]
\]
where $\Res(f(z),\zeta)$ denotes the residue of $f(z)$ at $z=\zeta$.
Since $|r(z)| \leq 1$ for  $z$ on $A_N$, the integrand is $O(1/N^{\el+1})$ on $A_N$.
Since $A_N$ has length $O(N)$, it follows that $I_N = O(1/N^\el)$.
Thus $I_N\to 0$ as $N\to\infty$,
so
\[
	\Res \lp \f{ r(z)}{ z^{ \el + 1 } } ; 0 \rp + \sum_{ k \in \Z } \Res \lp \f{ r(z)}{ z^{ \el + 1 } } ; z_k \rp = 0. 
\]
Since $\Res(r(z)/z^{\el+1}, 0) = r_\el/\el!$, we have
\[
	r_{\el} = - \el! \sum_{ k \in \Z } \Res \lp \f{ r(z)}{ z^{ \el + 1 } } ; z_k \rp. 
\]
Since $\Res(r(z)/z^{\el+1}, z_k) = -1/2z_k^{\el+1}$, we obtain (\ref{eq:main1}).

To obtain (\ref{eq:main2}), we note that except for $k=0$, the terms in (\ref{eq:main1}) come in pairs for which the real parts add and the imaginary parts cancel.
Thus
\begin{equation}
	r_\ell = \frac{\ell!}{2}\sum_{k \in \Z} {1\over z_k^{\ell+1}}
	= \frac{\ell!}{2(\log 2)^{\ell+1}}+ \ell! \sum_{k \ge 1} \Real \left(  {1\over z_k^{\ell+1}} \right).
	\label{eq:main3}
\end{equation}
We rewrite $1/z_k$ as $z_k = \rho_k e^{\theta_k}$, where $\rho_k = 1/((\log 2)^2 + 4\pi^2 k)^{1/2}$ and
$\cos \theta_k = \rho_k  \log 2$.
Since raising $1/z_k$ to the $(\el+1)$-st power raises its magnitude to the $(\el+1)$-st power and multiplies its angle by $\el+1$,
we obtain
\begin{align*}
	\Real\left({1\over z_	k^{\el+1}}\right) 
	&= \rho_k^{\el+1} \, \cos((\el+1)\theta_k) = \rho_k^{\el+1} \, T_{\el+1}(\cos \theta_k) \\
	&= {1\over ((\log 2)^2 + 4\pi^2 k)^{(\el+1)/2}} \, T_{\el+1}\left({\log 2 \over ((\log 2)^2 + 4\pi^2 k)^{1/2}}\right).
\end{align*}
Substituting this result into (\ref{eq:main3}) yields (\ref{eq:main2}).
\end{proof}

In principle we could apply the same method to find a convergent and asymptotic (for each fixed $m\ge 0$) series for
$r_{m,\el}$.
This, however, would require finding the residues of the higher-order poles of $r_m(z)$ at the points $z_k$, which is very awkward.
Instead, we combine the preceding theorem withTheorem \ref{Main} to obtain the following corollary.

\begin{cor}
For $m\ge 0$ and $\el\ge 1$, we have
\[
	r_{m,\el} = \f{ \el!}{ 2^{m+1} m!} \sum_{ i = 0 }^{m} 
	\cy{m + 1}{ i + 1}  \sum_{ k \in \Z } {1\over z_k^{ \el+i + 1} }.
\]
This can be rewritten in terms of real quantities as
\begin{align*}
	r_{ m,\el} &= \f{ 1}{ 2^m m!} \sum_{ i = 0 }^{m} \cy{m + 1}{ i + 1} \times \\
	&\qquad 
	\left( { (\el+i)! \over  
	2 (\log 2)^{ \el+i + 1} } 
	+\sum_{ k \ge 1 } { (\el+i)!  \over ((\log 2)^2 + 4 \pi^2 k^2)^{( \el +i+ 1)/2 }} \, 
	T_{ \el +i+ 1} \lp \f{ \log 2 }{ \sqrt{ \log^2 2 + 4 \pi^2 k^2 } } \rp\right). \\
\end{align*}
\end{cor}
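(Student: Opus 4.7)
The plan is to obtain the corollary as a direct substitution: Theorem \ref{Main} already expresses $r_{m,\el}$ as a linear combination of the unbarred values $r_{\el+i}$ for $0\le i\le m$ with coefficients $\cy{m+1}{i+1}/2^m m!$, and Theorem \ref{Inf2} gives each $r_{\el+i}$ as an infinite series. So the whole proof is a two-step chain: first invoke Theorem \ref{Main}, then plug in the two forms of Theorem \ref{Inf2}.

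More concretely, I would begin by recalling
\[
	r_{m,\el} = \f{1}{2^m m!}\sum_{i=0}^{m}\cy{m+1}{i+1} r_{\el+i}
\]
from Theorem \ref{Main}. For the first (complex) form of the corollary, I would apply equation (\ref{eq:main1}) with $\el$ replaced by $\el+i$, noting that since $\el\ge 1$ we also have $\el+i\ge 1$ for every $i$ in the range $0\le i\le m$, so Theorem \ref{Inf2} applies to each term. Substituting
\[
	r_{\el+i} = \f{(\el+i)!}{2}\sum_{k\in\Z}\f{1}{z_k^{\el+i+1}}
\]
and pulling the factor of $1/2$ out in front to combine with $1/2^m m!$ yields the stated identity.

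For the real form, the same substitution is carried out but using (\ref{eq:main2}) in place of (\ref{eq:main1}): each $r_{\el+i}$ is written as the real series
\[
	r_{\el+i} = \f{(\el+i)!}{2(\log 2)^{\el+i+1}} + \sum_{k\ge 1}\f{(\el+i)!}{((\log 2)^2 + 4\pi^2 k^2)^{(\el+i+1)/2}}\, T_{\el+i+1}\!\lp\f{\log 2}{\sqrt{\log^2 2 + 4\pi^2 k^2}}\rp,
\]
and the resulting expression is multiplied by the Theorem \ref{Main} coefficients $\cy{m+1}{i+1}/2^m m!$ and summed over $i$.

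There really is no substantial obstacle here; the only thing to watch is that Theorem \ref{Inf2} requires $\el\ge 1$, which the hypothesis $\el\ge 1$ of the corollary guarantees for every index $\el+i$ appearing on the right-hand side. Since both substitutions are term-by-term and the outer sum over $i$ is finite, no questions about interchange of summation arise. The mild bookkeeping step is merely distributing the factor $1/2$ from Theorem \ref{Inf2} with the prefactor $1/2^m m!$ from Theorem \ref{Main} to obtain the clean $1/2^{m+1} m!$ in the first form.
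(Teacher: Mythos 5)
Your proposal matches the paper's own proof exactly: the corollary is obtained by substituting Theorem \ref{Inf2} (in both its complex and real forms), applied with $\el$ replaced by $\el+i$, into the linear combination of Theorem \ref{Main}, which is precisely the two-step chain you describe. One small point worth flagging: the substitution actually produces a factor $(\el+i)!$ inside the sum over $i$ (consistent with the corollary's second, real-valued form), so the $\el!$ pulled out in front of the first displayed formula appears to be a typo in the statement rather than something your argument should, or could, reproduce.
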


\section{Acknowledgment}

The research reported in this paper was supported in part by grant CCF 0917026 from the National Science Foundation.

\end{document}